\documentclass[11pt]{amsart}
\usepackage{geometry}
\geometry{letterpaper}                   
\geometry{left=1.5in, right=1.5in, bottom=1.7in, top=1.7in, includefoot}
\usepackage{hyperref}
\usepackage{MnSymbol}
\usepackage{mathrsfs}
\usepackage{amsmath,amsthm}
\usepackage{tikz}

\usetikzlibrary{
  knots,
  hobby,
  decorations.pathreplacing,
  shapes.geometric,
  calc,
  decorations.markings
}
\usepgfmodule{decorations}
\usepackage{float}
\usepackage{braids}
\usepackage{setspace}
\usepackage{tikz-cd}

\usepackage{wrapfig}
\usepackage{lipsum} 

\theoremstyle{plain}
\theoremstyle{definition}
\newtheorem{Theorem}{Theorem}[section]

\newtheorem{Lemma}[Theorem]{Lemma}
\newtheorem{Proposition}[Theorem]{Proposition}

\newtheorem{Definition}[Theorem]{Definition}
\newtheorem{Remark}[Theorem]{Remark}

\oddsidemargin=-0.1in 
\evensidemargin=-0.1in 
\textwidth=6.6in
\topmargin=-0.5in 
\textheight=9.1in
\numberwithin{equation}{section}
\DeclareMathAlphabet\mathbb{U}{msb}{m}{n}

\sloppy

\newcommand{\Cay}{\text{\bf Cay}}
\newcommand{\N}{N_{r,k}}

\newcommand{\JJ}{\mathcal{J}_{r,k}}

\newcommand{\RR}{\mathbb{R}}

\newcommand{\Cent}{\text{Cent}}

\newcommand{\ZZ}{\mathbb{Z}}

\newcommand{\Nrk}{N_{r,k}}

\newcommand{\ssspan}{\text{span}}

\begin{document}

\providecommand{\keywords}[1]
{
  \small	
  \textbf{\textit{Keywords---}} #1
}
\title{On the geometry of free nilpotent groups}

\author{Artem Semidetnov}
\author{Ruslan Magdiev}

\address{Laboratory of Continuous Mathematical Education (School 564 of St. Petersburg), nab. Obvodnogo kanala 143, Saint Petersburg, Russia}
\email{artemsemidetnov@gmail.com}
\address{ St. Petersburg State University, 14th Line, 29b, Saint Petersburg, 199178 Russia}
\email{rus.magdy@mail.ru}

\begin{abstract}

In this article, we study geometric properties of nilpotent groups. We find a geometric criterion for the word problem for the finitely generated free nilpotent groups. By geometric criterion we mean a way to determine whether two words represent the same element in a free nilpotent group of rank $r$ and class $k$ by analyzing their behaviour on the Cayley graph of the free nilpotent group of rank $r$ and class $k-1$.
\end{abstract}

\keywords{Geometry, Nilpotent groups, free, Cayley graph, word problem, criterion}

\maketitle

\section{Introduction and background}

Nilpotent groups are a long-standing topic in the group theory. In relatively recent years they have started to attract attention from a geometric point of view. As Gromov proved in \cite{pol}, a group as a metric space has polynomial growth if and only if it is virtually nilpotent. It has become a world-recognized example of the geometric uniqueness of nilpotent groups.

As in most varieties of groups, the key to understanding the whole structure is  studying the free objects of the variety. In the case in question, these are the free nilpotent groups. Apparently, any finitely-generated free nilpotent group can be determined by only two positive integers which are the number of generators and the class of nilpotency. The complexity of the geometric structure of these groups increases with these numbers.

The most studied non-abelian free nilpotent group has the smallest determining values among all of the non-abelian free nilpotent groups. These values are 2 and 2. This group is also known as the discrete Heisenberg group $H(\ZZ)$. By its original definition, it coincides with the group of upper triangular matrices over the ring of integers. However, we are interested in the following presentation:

$$H(\ZZ) = \langle a,b \;|\; [a,[a,b]]=[b,[a,b]] = 1\rangle. $$

With respect to this presentation the following fact is true: two words drawn in its Cayley graph end in the same point if and only if special-defined closures of projections on the horizontal plane have the same area. The main result of the present article is to generalize this statement to all finitely generated free nilpotent groups.

  \begin{figure}[h]
  \centering{
\includegraphics[width=110mm]{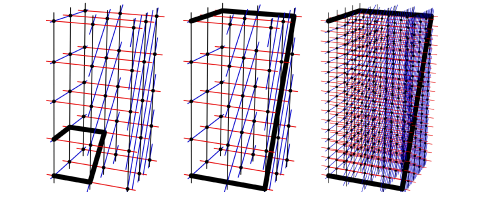}}
\caption{Cayley graph of $H(\ZZ)$ with respect to the generating set $\{a,b,[a,b]\}$.}
\label{CayN2}
\end{figure}

\subsection{Notions}

Let $G$ be a group. For $a,b\in G$ we put $[a,b] = aba^{-1}b^{-1}$ and $[x_1,x_2,\ldots,x_n]$ for $[[[x_1,x_2],x_3]\ldots,x_n]$. For the sake of clarity we define commutator in the degenerate case of $n=1$ as $[x_1] = x_1$. Within a group we will write $[H, K]$ for the subgroup generated by all commutators $[h, k]$ with $h$ ranging over $H \leq G$ and $k$ ranging over $K \leq G$, so that in particular $[G, G]$ is the usual commutator subgroup of $G$. Define a sequence of subgroups inductively as $\gamma_1(G) = G,\; \gamma_s(G) = [\gamma_{s-1}(G),G]$. The group is said to be nilpotent of class $k$ if this sequence converges to trivial subgroup in exactly $k+1$ steps:
$$\{1\} = \gamma_{k+1}(G)\vartriangleleft \gamma_k(G) \vartriangleleft\ldots\vartriangleleft \gamma_3(G)\vartriangleleft \gamma_2(G) \vartriangleleft \gamma_1(G) = G.$$

For $r,k \in \mathbb{N}$ define the free nilpotent group of rank $r$ and class $k$ as the quotient group $F_r/\gamma_{k+1}(F_r)$, where $F_r$ is a unique free non-abelian group of rank $r$. For a basis of $F_r$, say, $\{s_1,\ldots, s_r\}$ put $\JJ =\{ [s_{i_1},\ldots,s_{i_k}] \; | \; \forall i_j\}$. Using this notion we define the standard presentation for a free nilpotent group of rank $r$ and class $k$:
$$N_{r,k} = \langle s_1,s_2,\ldots, s_r \; | \; \mathcal{J}_{r,k+1} \rangle$$

All of the definitions of geometric-group-theory terms may be found in \cite{hours}. However, here we want to put emphasis on the following construction. Let $G$ be a group and $S$ be a fixed set of generators for this group. For a given word $w$ over an alphabet $S \cup S^{-1}$ there's a way to 'draw' this word on $\Cay(G,S)$. It consists of elements represented by prefixes of $w$ which are connected by edges in a consequential order. 

Obviously, two words represent the same element in the group, if and only if their curves end in the same vertex on the Cayley graph. You may see the curves that correspond to the words $aba^{-1}b^{-1}, a^2b^2a^{-2}b^{-2}$ and $a^4b^4a^{-4}b^{-4}$ in the discrete Heisenberg group on the figure \ref{CayN2}. 

In this article, the idea of a word in $F(S)$ being able to represent elements in all of the groups with generating sets equinumerous to $S$ is used several times. That is why for $w\in F_r$ denote $[w]_k$ as the element of $N_{r,k}$ represented by $w$.

\subsection{Known results on $\N$}
Going back to free nilpotent groups, throughout this paper we will consider them only with the standard generating sets, so instead of $\Cay(\N,\{s_1,\ldots,s_r\})$ we will write just $\Cay(\N)$. Now let's pick $r,k>1$. We avoid considering cases $r=1$ or $k=1$ because they are abelian. There is a very helpful normal form for elements of $\N$ that uses so-called Mal'tsev bases. A precise definition is given in \cite{Random} but in our special case, we will just call this name sets $\JJ$. Put $\Delta_{r,k} = |\JJ|$ and denote $c(r,k) := |\Delta_{r,1}| + \ldots +|\Delta_{r,k}|$.

Due to the fact that a Mal'tsev basis $\mathcal{J}_{r,j}$ is equinumerous to the set $\{1,\ldots, \Delta_{r,j}\}$ we can order these sets linearly. Choose such an order $\preccurlyeq$. Define the canonical form of an element $g$ as 
\begin{equation}\label{word pres}
    u_g = \prod_{w_i \in \mathcal{J}_{r,k}}w_i^{\alpha^{k}_i} \cdot\ldots\cdot \prod_{w_i \in \mathcal{J}_{r,2}}w_i^{\alpha^2_i}\cdot \prod_{s_i \in S}s_i^{\alpha^1_i}
\end{equation}
where the products are ordered in correspondence to $\preccurlyeq$.
Though it may seem confusing, it is just a product of elements of $\cup \mathcal{J}_{r,i}$ for $i = \overline{1,k}$ in some powers $\alpha_i^j$, uniquely defined by the element $g$.

\subsection{Geometric characteristic for words and the main result.}

Here we define a geometric characteristic for words in $F_r$ presenting elements of $N_{r,k}$. The main result, that is {\bf Theorem A} draws a connection between this characteristic and the word problem for $N_{r,k}$.

The (\ref{word pres}) product gives rise to a coordinate form of elements of $\N$: each of them can be associated with an array of integers, given by powers of the elements of $\cup \mathcal{J}_{r,i}$\footnote{in the example above these are $\alpha_i^j$}, with $i=\overline{1,k}$. As $c(r,k) = \Delta_{r,k}+c(r,k-1)$ for an element $g$ we will simply write
$$g = (T_1,T_2,\ldots, T_{\Delta_{r,k}},t_{c(r,k-1)},\ldots, t_1)$$
and call $T_i$ and $t_j$ for $i=\overline{1,\Delta_{r,k}}$ and $j =\overline{1,c(r,k-1)}$ coordinates. We distinguish them by capitalization because of the folowing fact. Let $\pi_k$ be the canonical epimorphism $N_{r,k}\to N_{r,k-1}$ which kernel is $\gamma_k(N_{r,k})$. It is easy to see that under that mapping 
$$(T_1,T_2,\ldots, T_{\Delta_{r,k}},t_{c(r,k-1)},\ldots, t_1) \mapsto (t_{c(r,k-1)},\ldots, t_1)$$

The coordinate system defines a way to immerse $\Cay(\N)$ into $\RR^{c(r,k)}$ via the map identifying coordinates in $N_{r,k}$ and $c(r,k)$-tuples in $\RR^{c(r,k)}$.

As there is a chosen order $\preccurlyeq$ on $\JJ$ denote $K_i$ the $i$-th element in $\JJ$. It is easy to see that $K_i$ represents the element
$$(0,\ldots, 0,1,0,\ldots,0)$$
where the only non-zero coordinate is the $i$-th.

\begin{Definition}
Let $w$ be a word in $F_r$ and $k\in \mathbb{N}$. Let $i\in \{1,\ldots,\Delta_{r,k}\}$.  Then $K_i = [k,s]$ where $s\in \{s_1,\ldots, s_r\}$ and $k\in \mathcal{J}_{r,k-1}$. Denote by $\overline{s}, \overline{k}\in \RR^{c(r,k-1)}$ the images of $s,k$ under the immersion defined above.
Let ${\bf W}\subset \RR^{c(r,k)}$ be the curve for the word $w$ on the $\Cay(\N)$. We say that the orthogonal projection of ${\bf W}$ on the plane $\ssspan(\overline{s}, \overline{k})$ is the projection of the word $w$ on the $i$-th coordinate.
\end{Definition}

Sometimes we will abuse notation and say that the projection is being considered not on the $i$-th coordinate but on the $K_i$-th coordinate.

Now we define {\it area} for a projection of a word $w$ on the $i$-th coordinate for some $i$. Obviously, area enclosed be some curve can be defined only in the case when the said curve is closed. It is explained below that the curve for a word $w$ on $\Cay(\N)$ projected on $\Cay(N_{r,k-1})$ is closed if and only if it represents an element of $\gamma_k(\N)$. 

Define {\bf closure} of a word $w$ representing $g \in \N$ as the $u_{\pi_k(g^{-1})}$. As it is explained below the curve's projection for $\omega\cdot u_{\pi_k(g^{-1})}$ projection on $\Cay(N_{r,k-1})$ is closed. Call the oriented area of the polygon defined as the projection of $\omega\cdot u_{\pi_k(g^{-1})}$ onto the $i$-th coordinate the area of $w$'s projection onto the $i$-th coordinate.

Note that closure of $K_i$ is the empty word. For each $K_i \in \JJ$ denote $C_i$ for its projection's area on the $i$-th coordinate.

Our main result is the following fact: 

\begin{spacing}{1.5}
\end{spacing}

\noindent {\bf Theorem A}.
A word $w \in F_r$ represents an element $g\in \Nrk$ with coordinates $(T_1,T_2,\ldots,T_{\Delta_{r,k}},t_{c(r,k-1)},\ldots,t_1)$ if and only if $w$ represents an element $(t_{c(r,k-1)},\ldots,t_1)$ in $N_{r,k-1}$ and areas of its standard closure's projections onto $T_i$-coordinates equal to $C_iT_i$ for $i = \overline{1,\Delta_{r,k}}.$
\begin{spacing}{1.5}
\end{spacing}

 \section{Some properties of $\N$}
 In this section, we elaborate on some properties of free nilpotent groups. Again, pick constants $r,k>1$.
 \begin{Proposition}
 Center of the $\N$ coincides with $\gamma_k(\N)$.
 \end{Proposition}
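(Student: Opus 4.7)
The plan is to prove the two inclusions $\gamma_k(\N) \subseteq Z(\N)$ and $Z(\N) \subseteq \gamma_k(\N)$ separately. The first is immediate from the definition of the nilpotency class: since $[\gamma_k(\N), \N] = \gamma_{k+1}(\N) = \{1\}$, every element of $\gamma_k(\N)$ commutes with every element of $\N$. This uses no freeness.

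For the reverse inclusion I would argue by contradiction. Suppose $g \in Z(\N) \setminus \gamma_k(\N)$ and let $m$ be the largest integer with $g \in \gamma_m(\N)$; then $1 \leq m < k$ and the class $\bar g$ of $g$ in the quotient $\gamma_m(\N)/\gamma_{m+1}(\N)$ is non-zero. The group commutator induces the standard bilinear pairing
$$\gamma_m(\N)/\gamma_{m+1}(\N) \;\times\; \N/\gamma_2(\N) \;\longrightarrow\; \gamma_{m+1}(\N)/\gamma_{m+2}(\N),$$
under which the class of $[g, s_i]$ equals the bracket of $\bar g$ with the class of $s_i$. Centrality of $g$ forces $[g, s_i] = 1$ in $\N$, so this bracket vanishes for every generator $s_i$.

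The plan is then to invoke the classical theorem of Magnus and P. Hall: the associated graded ring $\bigoplus_{i=1}^{k} \gamma_i(\N)/\gamma_{i+1}(\N)$ is the free nilpotent Lie ring of rank $r$ and class $k$, with bracket given by the pairing above. In the free nilpotent Lie ring of class $k$ the center equals the top graded piece (weight $k$), because any non-zero homogeneous element of weight $m < k$ can be separated from zero by bracketing with a suitable weight-$1$ generator. Applied to our $\bar g$ of weight $m < k$, this supplies a generator $s_i$ with $[\bar g, \bar s_i] \neq 0$ in $\gamma_{m+1}/\gamma_{m+2}$, contradicting what was derived above.

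The main obstacle is this last step: justifying that a non-zero weight-$m$ element in the associated graded Lie ring of $\N$ cannot commute with every generator. The cleanest route is to cite the Magnus/Hall identification of the associated graded with the free nilpotent Lie ring, whence the description of its center is standard. A self-contained alternative is to argue directly in canonical form (\ref{word pres}), picking some $w \in \mathcal{J}_{r,m}$ appearing with non-zero coefficient in $\bar g$ and finding a generator $s_i$ such that $[w, s_i]$ survives, after Hall normalization, in the canonical form of $[g, s_i]$; but this essentially reproves freeness of the associated graded in coordinates, so citing it seems preferable.
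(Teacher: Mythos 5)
Your proof is correct, but there is nothing in the paper to compare it against: the authors state this Proposition without any proof, treating it as a known fact about free nilpotent groups (it is classical; see e.g.\ Hall's book cited as \cite{loh}). Your two inclusions are both sound. The easy direction $\gamma_k(\N)\subseteq Z(\N)$ is exactly as you say. For the converse, your reduction to the associated graded Lie ring is the standard argument: the pairing $\gamma_m/\gamma_{m+1}\times \N/\gamma_2\to\gamma_{m+1}/\gamma_{m+2}$ is well defined by the usual commutator identities, and since $m+1\le k$ the weight-$(m+1)$ component of the free nilpotent Lie ring agrees with that of the free Lie ring, so the Magnus--Hall--Witt identification lets you conclude from the fact that a nonzero homogeneous element of positive weight in the free Lie ring on $r\ge 2$ generators cannot commute with every generator (e.g.\ because the free Lie ring embeds in the tensor algebra, whose center is the ground ring). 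Two minor points worth making explicit if you write this up: the hypothesis $r>1$ fixed at the start of Section~2 is genuinely needed (for $r=1$ the statement is false when $k\ge 2$, since $N_{1,k}=\ZZ$ is abelian while $\gamma_k$ is trivial), and it enters your argument precisely at the ``suitable weight-$1$ generator'' step; and the existence of the maximal $m$ with $g\in\gamma_m(\N)$ uses $g\ne 1$, which holds because $1\in\gamma_k(\N)$ while $g\notin\gamma_k(\N)$.
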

 
\begin{Remark}\label{closed}
A word $w$ represents a closed curve on $\Cay(N_{r,k-1})$ if and only if $[w]_k \in \gamma_k(\N)$.
\end{Remark}

\begin{Lemma}\label{Area lemma}The following facts are true
\begin{enumerate}
    \item Area of a word $u\in \mathcal{J}_{r,k+1}$ or its inverse projected on any $i$-th coordinate equals zero.
    \item Let $w,v \in F(S)$. If $[w]_k = [v]_k$ then on any $i$-th coordinate areas of their projections coincide.
    \item Let $w\in \JJ$ and $v \in F(S)$ and let their $w$-projection areas be equal to $\Sigma_w$ and $\Sigma_v$ respectively. Then the area of $w$ projection of $vw$ equals to $\Sigma_v+\Sigma_w$.
\end{enumerate}
\end{Lemma}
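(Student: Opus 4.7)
All three parts rest on two standard properties of the planar signed area $A = \tfrac12\oint(x\,dy - y\,dx)$: additivity along concatenation of sub-paths, and translation invariance for closed sub-paths. Everything below is computed inside the two-dimensional plane $\ssspan(\overline s, \overline{k'})$ onto which we project; in each case, the ``input'' from the Cayley geometry is that certain sub-words represent closed curves in $\Cay(N_{r,k-1})$.

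For (1), write $u = [u_0, s] \in \mathcal J_{r,k+1}$ as the word $u_0\, s\, u_0^{-1}\, s^{-1}$ with $u_0 \in \mathcal J_{r,k}$ and $s \in S$. Since $u_0 \in \gamma_k(N_{r,k-1}) = \{1\}$, its curve in $\Cay(N_{r,k-1})$ is a closed loop based at the origin, of some signed area $\Sigma_{u_0}$. The path for $u$ decomposes into this loop at the origin, the segment $s$, the loop of $u_0^{-1}$ based at $\overline s$, and the segment $s^{-1}$. The two segments cancel (same edge traversed in opposite directions), while the translated $u_0^{-1}$-loop contributes $-\Sigma_{u_0}$ by translation invariance combined with orientation reversal. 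Hence $A(u) = \Sigma_{u_0} - \Sigma_{u_0} = 0$, and $A(u^{-1}) = -A(u) = 0$.

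For (2), the hypothesis $[w]_k = [v]_k$ implies $w v^{-1} = \prod_i g_i r_i g_i^{-1}$ in $F_r$ for some relators $r_i \in \mathcal J_{r,k+1}$. A single factor $g r g^{-1}$ produces the concatenation: the path $g$, then the loop of $r$ based at $\overline g$, then the reverse path $g^{-1}$. The loop of $r$ is closed in $\Cay(N_{r,k-1})$ (because $r$ is trivial there), and by translation invariance its signed area equals the signed area of $r$ at the origin, which is zero by part~(1); the $g$ and $g^{-1}$ pieces cancel. Each conjugate therefore contributes zero signed area, and additivity handles the whole product. Finally, the difference between the areas attributed to $w$ and to $v$, both using the common closure $u = u_{\pi_k([w]_k^{-1})}$, equals the signed area of the closed curve $(w u)(v u)^{-1} = w v^{-1}$, which we have just shown to be zero.

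For (3), set $u = u_{\pi_k([v]_k^{-1})}$. Since $w \in \mathcal J_{r,k} \subseteq \gamma_k(N_{r,k})$ we have $\pi_k([w]_k) = 1$, so $u$ serves as the closure for both $v$ and $v w$. The projected path of $v w u$ decomposes into the sub-path for $v$, the sub-loop for $w$ based at $\overline v$, and the sub-path for $u$ from $\overline v$ back to the origin. The $w$-loop is closed (because $w \in \gamma_k$) and by translation invariance contributes $\Sigma_w$; the remaining two pieces, $v$ and $u$, reassemble into the closed curve $v u$ whose signed area is $\Sigma_v$ by definition. Additivity gives the claim. The hardest step is the bookkeeping in part~(2): one must convert the algebraic identity ``$w v^{-1}$ is a product of conjugates of $\mathcal J_{r,k+1}$-relators'' into the geometric identity ``the difference between the closure areas of $w$ and $v$ is zero'', and translation invariance of the planar signed area is the conceptual engine that makes this, and the other two parts, go through.
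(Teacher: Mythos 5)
Your argument is correct and rests on the same engine as the paper's proof: additivity and translation invariance of the planar signed area, applied to the decomposition of a commutator (resp.\ a conjugated relator) into cancelling segments plus translated closed loops. Part (2) is the paper's null-sequence argument in the equivalent language of products of conjugates of relators from $\mathcal{J}_{r,k+1}$, with the same geometric content. The only real divergence is part (3): the paper uses that $[w]_k$ is central to replace $vw$ by $wv$ and then invokes part (2) to read off the area of the $w$-loop at the origin, whereas you decompose the curve of $vwu$ directly and let translation invariance handle the $w$-loop based at $\overline{v}$ --- both are valid, and your version has the small advantage of not depending on part (2) or on centrality.
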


\begin{Lemma}\label{govno}
If $g\in \Nrk$ has the following coordinates

$$g = (T_1,\ldots, T_{\Delta_{r,k}},0\ldots,0), $$

and each of all its areas on $i$-th projections are zeros, then $g = 1$.
\end{Lemma}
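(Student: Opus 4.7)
The plan is to view the tuple of projection areas as a homomorphism from the central subgroup $\gamma_k(\Nrk)$ to $\RR^{\Delta_{r,k}}$ and to prove it is injective; the lemma then follows instantly.

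First I would note that the coordinate assumption $(T_1,\ldots,T_{\Delta_{r,k}},0,\ldots,0)$ is equivalent to $\pi_k(g)=1$, so $g\in\gamma_k(\Nrk)$. By the Proposition at the start of the section $\gamma_k(\Nrk)$ is central, and since $\Nrk$ is free of class $k$ it is free abelian on $\JJ=\{K_1,\ldots,K_{\Delta_{r,k}}\}$; the canonical form therefore collapses to $u_g=\prod_i K_i^{T_i}$ with pairwise commuting factors. For each $i$ write $A_i(h)$ for the signed area of the projection of $h$ on the $i$-th coordinate. Lemma \ref{Area lemma}(2) shows that $A_i$ descends to $\Nrk$, and Lemma \ref{Area lemma}(3) iterated along the canonical form (legitimate because the $K_j$'s commute) yields
\begin{equation*}
A_i(g)\;=\;\sum_{j=1}^{\Delta_{r,k}} T_j\,A_i(K_j).
\end{equation*}
So the hypothesis becomes a homogeneous linear system $M\,(T_1,\ldots,T_{\Delta_{r,k}})^{\top}=0$ with coefficient matrix $M=(A_i(K_j))$, and the conclusion $g=1$ is equivalent to $\det M\neq 0$.

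I expect that $M$ is in fact diagonal, with $A_i(K_i)=C_i$ along the diagonal. The equality $A_i(K_i)=C_i$ is just the definition of $C_i$; the off-diagonal vanishing $A_i(K_j)=0$ for $j\neq i$ is the real obstacle. I would write $K_i=[\kappa_i,\sigma_i]$ and $K_j=[\kappa_j,\sigma_j]$ with $\kappa_\bullet\in\mathcal{J}_{r,k-1}$, $\sigma_\bullet\in S$, and analyse the explicit loop $u_{\kappa_j}\sigma_j u_{\kappa_j}^{-1}\sigma_j^{-1}$ traced in $\Cay(N_{r,k-1})$. The target plane $\ssspan(\overline{\sigma_i},\overline{\kappa_i})$ is spanned by one level-$1$ and one level-$(k-1)$ basis vector, so the plan is to show that the loop for $K_j$ either never moves in one of these two directions (whence its projection is a degenerate segment with zero area) or, when both directions are visited, the net signed area cancels by the shoelace formula. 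A case split on whether $\kappa_i=\kappa_j$ and whether $\sigma_i=\sigma_j$ seems to be the cleanest way to organise the argument, and Lemma \ref{Area lemma}(2) is useful to replace an inconvenient loop by a homologous one whose transverse coordinates are manifestly untouched.

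Once diagonality of $M$ is established, $\det M=\prod_i C_i\neq 0$, the system $MT=0$ forces every $T_i$ to vanish, and $g=1$.
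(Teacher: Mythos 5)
Your reduction of the lemma to the injectivity of the matrix $M=(A_i(K_j))$ is the right frame, and it is in fact the same route the paper takes: Remark~\ref{area of ug}, which the paper invokes without proof, is exactly your diagonality claim packaged as ``the areas of $u_g$ equal $C_iT_i$''. The problem is the step you yourself flag as the real obstacle. The off-diagonal vanishing $A_i(K_j)=0$ for $j\neq i$ is not just missing from your sketch --- it fails for $r\geq 3$. Take $N_{3,3}$ with generators $a\prec b\prec c$, and consider the basic commutators $K_i=[[a,b],c]$ (projection plane $\ssspan(\overline{c},\overline{[a,b]})$) and $K_j=[[a,c],b]$, whose loop in $\Cay(N_{3,2})$ is $aca^{-1}c^{-1}\,b\,cac^{-1}a^{-1}\,b^{-1}$. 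With the normal form of \eqref{word pres} the $[a,b]$-coordinate changes only along $a^{\pm1}$-edges, by $\mp y$ where $y$ is the current $b$-coordinate; the two $a^{\pm1}$-edges inside the conjugated copy of $[a,c]^{-1}$ are traversed at $b$-height $y=1$ and at $c$-heights $1$ and $0$ respectively, so the $(\overline{c},\overline{[a,b]})$-projection is the closed polygon $(0,0)\to(1,0)\to(0,0)\to(1,0)\to(1,-1)\to(0,-1)\to(0,0)$ with signed area $-1\neq 0$. This is forced structurally by the Jacobi identity ($[[a,c],b]\equiv[[a,b],c]\cdot[[b,c],a]$ modulo $\gamma_4$), so your planned case analysis --- ``either the loop never moves in one of the two directions, or the signed area cancels'' --- cannot go through in the case $\kappa_i\neq\kappa_j$, $\sigma_i\neq\sigma_j$. (The value of the cross-area even depends on the choice of the order $\preccurlyeq$ in the normal form, which is another sign that no purely formal cancellation argument is available.) Diagonality does hold for $r=2$, which is why the Heisenberg and $N_{2,3}$ examples do not expose the issue.

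What the lemma actually requires is only that $M$ be injective on $\ZZ^{\Delta_{r,k}}$, and neither your proposal nor the paper establishes this once the off-diagonal entries are admitted to be nonzero; a genuinely different argument is needed (for instance, identifying the functionals $A_{[\kappa,\sigma]}$ restricted to $\gamma_k(\Nrk)$ with coefficients of the image of the degree-$k$ part of the free Lie algebra in $L_{k-1}\otimes L_1$ under the map dual to the bracket, and showing that the coefficients indexed by basic commutators already separate points). As written, your argument proves the lemma only for $r=2$; for $r\geq3$ the key matrix is not diagonal, so the final step ``$\det M=\prod_iC_i$'' is unjustified. You should also be aware that the paper's own one-line proof inherits precisely the same gap through the unproven Remark~\ref{area of ug}.
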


\subsection{Values of $c(r,k)$ and $\Delta_{r,k}$.}
\begin{Proposition}[see \cite{loh} 7, 11.2.2]
Numbers $\Delta_{r,k}$ are given by necklace polynomials: 
$$\Delta_{r,k} = \frac{1}{k}\sum_{d \,| \,k}\mu(d)r^{k/d}$$
Where $\mu$ is the m$\ddot o$bius function.
\end{Proposition}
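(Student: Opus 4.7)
The plan is two-fold: first identify $\Delta_{r,k}$ as the rank of the free abelian group $\gamma_k(F_r)/\gamma_{k+1}(F_r)$, and then apply Witt's classical formula for the dimensions of the homogeneous components of the free Lie algebra on $r$ generators. Both halves are standard, but only together do they yield the necklace expression.

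For the first step, I would observe that $\mathcal{J}_{r,k}$ consists of the weight-$k$ basic commutators built into the Mal'tsev normal form (\ref{word pres}). The uniqueness of this normal form for $\Nrk$ says precisely that the images of $\mathcal{J}_{r,k}$ form a $\Z$-basis of $\gamma_k(\Nrk)$, which via the canonical surjection $F_r \twoheadrightarrow \Nrk$ pulls back to the assertion that they form a $\Z$-basis of $\gamma_k(F_r)/\gamma_{k+1}(F_r)$. Hence
\[ \Delta_{r,k} \;=\; \mathrm{rank}\bigl(\gamma_k(F_r)/\gamma_{k+1}(F_r)\bigr). \]
Next, I would invoke the Magnus--Witt theorem identifying the associated graded Lie ring $L = \bigoplus_{k\geq 1}\gamma_k(F_r)/\gamma_{k+1}(F_r)$, with bracket induced from the group commutator, with the free Lie ring $L(r)$ on $r$ generators. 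Consequently $\Delta_{r,k}$ equals the rank of the degree-$k$ homogeneous component $L_k$.

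To finish, I would compute $\mathrm{rank}(L_k)$ using the Poincar\'e--Birkhoff--Witt theorem: the universal enveloping algebra of the free Lie algebra on $r$ generators is the free associative algebra in $r$ variables, whose Hilbert series is $(1-rt)^{-1}$. PBW applied to a homogeneous basis of $L$ then yields the product identity
\[ \frac{1}{1-rt} \;=\; \prod_{k\geq 1}\frac{1}{(1-t^k)^{\Delta_{r,k}}}. \]
Taking logarithms, expanding each $-\log(1-t^k)$ as a geometric series, and matching coefficients of $t^n$ on both sides gives the relation $r^n = \sum_{d\mid n} d\,\Delta_{r,d}$. Applying M\"obius inversion then produces the desired necklace expression
\[ \Delta_{r,k} \;=\; \frac{1}{k}\sum_{d\mid k}\mu(d)\,r^{k/d}. \]

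The main obstacle, modulo citing PBW and running the routine Möbius bookkeeping, is the identification used in the second step, namely that the associated graded Lie ring of the free group $F_r$ is the free Lie ring. Proving this directly requires either Hall's construction of basic commutators together with a careful collection argument, or Magnus' embedding of $F_r$ into the multiplicative group of the power series ring $\Z\langle\langle x_1,\ldots,x_r\rangle\rangle$ and analysis of the resulting filtration. This is the genuinely substantial input; once it is granted, the necklace formula drops out of pure generating-function manipulation.
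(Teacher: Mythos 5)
Your argument is correct, but be aware that the paper itself offers no proof of this proposition: it is imported verbatim from Marshall Hall's book (\cite{loh}, Theorem 11.2.2), so the honest comparison is with Hall's argument. Hall counts the weight-$k$ basic commutators directly: working in the free associative ring on $r$ generators he matches monomials with ``basic products'' of basic commutators, obtains the same identity $\frac{1}{1-rt}=\prod_{k\geq 1}(1-t^k)^{-\Delta_{r,k}}$ by elementary combinatorics, and inverts it exactly as you do. You reach that identity through heavier machinery --- the Magnus--Witt identification of $\bigoplus_{k}\gamma_k(F_r)/\gamma_{k+1}(F_r)$ with the free Lie ring, plus Poincar\'e--Birkhoff--Witt --- and you rightly flag the Magnus--Witt step as the substantial input. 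What your route buys is a conceptual explanation of \emph{why} $\Delta_{r,k}$ is a Witt number (it is the rank of a lower central quotient); what Hall's route buys is self-containedness, since it never leaves the combinatorics of the free ring. Three points to tighten. First, the paper's literal definition $\mathcal{J}_{r,k}=\{[s_{i_1},\ldots,s_{i_k}]\}$ ranges over all left-normed commutators and so has cardinality up to $r^k$; the proposition is false for that reading, and your silent reinterpretation of $\mathcal{J}_{r,k}$ as the weight-$k$ part of a Mal'tsev (basic-commutator) basis should be made explicit. Second, PBW over $\Z$ requires $L$ to be a free $\Z$-module; this holds for the free Lie ring (Hall or Lyndon bases), or one can tensor with $\Q$ since only ranks are at stake --- a one-line remark would close this. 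Third, watch a mild circularity: in Hall's development the uniqueness of the normal form (\ref{word pres}), on which your first step leans, is itself proved \emph{using} the Witt count, so for your first step you should derive the linear independence of the basic commutators from the Magnus embedding or the free-Lie-algebra identification rather than cite normal-form uniqueness from Hall.
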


Also we prove the following recursion property for $c(r,k)$:

\begin{Lemma}\label{crk}
Value $c(r,k)$ satisfies following relation
$$c(r, k) = (r + 1) \cdot c(r, k - 1) -  r \cdot c(r, k - 2)$$
with $c(r,1) = r$, $c(r,2) = r(r+1)/2$ for $r>2$ and $c(2,2)=1$.
As a corollary, $c(2,k) = 2^{k-1}+1$. 
\end{Lemma}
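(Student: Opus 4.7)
My plan is to split the lemma into two tasks: establishing the three-term recursion, and then solving it to obtain the closed form for $r = 2$.

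For the recursion I would start from the Witt formula of the preceding proposition,
\[
\Delta_{r,k} = \frac{1}{k}\sum_{d \mid k} \mu(d)\, r^{k/d},
\]
or equivalently its M\"obius-dual version $\sum_{d \mid k} d\, \Delta_{r,d} = r^k$, which also follows directly by counting length-$k$ words over an $r$-letter alphabet as powers of primitive words. A direct telescoping computation reduces the three-term recursion for $c(r,k)$ to the two-term identity $\Delta_{r,k} = r\,\Delta_{r,k-1}$, so this is what I would try to verify. The natural tool is the generating function $D_r(t) = \sum_k \Delta_{r,k} t^k$ together with the classical identity $\prod_k (1 - t^k)^{\Delta_{r,k}} = 1 - rt$ (a consequence of the Poincar\'e--Birkhoff--Witt theorem for the free Lie algebra on $r$ generators). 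Rearranging this to a rational expression for $D_r(t)$ with denominator $1 - rt$ makes $C_r(t) = D_r(t)/(1-t)$ rational with denominator $(1-t)(1-rt)$, from which the three-term relation for $c(r,k)$ reads off at once.

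For the corollary I would just solve the recursion. Its characteristic polynomial factors as $x^2 - (r+1)x + r = (x-1)(x-r)$, so every solution has the form $c(r,k) = A + B r^k$. Specialising to $r = 2$ with initial values $c(2,1) = 2$ and $c(2,2) = 3$ (verified by enumerating the basic commutators $s_1$, $s_2$, $[s_1,s_2]$) gives the system $A + 2B = 2$, $A + 4B = 3$, so $A = 1$, $B = 1/2$, and $c(2,k) = 2^{k-1} + 1$.

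The main obstacle is the first step. Witt's formula indexes contributions by the divisors of $k$, and there is no term-by-term reason for $\Delta_{r,k}$ and $r\,\Delta_{r,k-1}$ to match since the divisor sets of $k$ and $k-1$ are essentially unrelated; staying at the level of the generating function, where the PBW product collects all divisors at once, should turn the recursion into a single polynomial identity in $t$, and this is where I expect the technical content of the proof to lie. The base cases $c(r,1) = r$ and $c(r,2) = r(r+1)/2$ reduce to the elementary count of basic commutators of weights $1$ and $2$: the weight-$2$ basis $\{[s_i,s_j] : i > j\}$ has size $\binom{r}{2}$, giving $r + \binom{r}{2} = r(r+1)/2$.
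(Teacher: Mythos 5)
Your reduction of the three-term recursion to the single identity $\Delta_{r,k} = r\,\Delta_{r,k-1}$ is correct, and you are right that this is where all the content lies --- but the tool you propose cannot deliver it, because that identity is \emph{false} for the numbers given by the necklace polynomials. Already at $r=2$ one has $\Delta_{2,3} = \tfrac{1}{3}(2^3-2) = 2$ while $\Delta_{2,4} = \tfrac{1}{4}(2^4-2^2) = 3 \neq r\,\Delta_{2,3} = 4$. Correspondingly, the PBW identity $\prod_k (1-t^k)^{\Delta_{r,k}} = 1-rt$ does not rearrange into a rational expression for $\sum_k \Delta_{r,k}t^k$ with denominator $1-rt$: the relation it encodes is multiplicative, and taking logarithms only returns the M\"obius-dual relation $\sum_{d\mid n} d\,\Delta_{r,d} = r^n$ that you already wrote down, which couples $\Delta_{r,n}$ to the divisors of $n$ and not to $\Delta_{r,n-1}$. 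So the generating-function step you were counting on is not a technical hurdle to be overcome but a dead end: the stated recursion is incompatible with the Witt formula, and no argument starting from it can close the gap.

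The paper's proof takes an entirely different, purely combinatorial route that bypasses the Witt formula. It introduces a ``graph of commutators'' $\Gamma_{r,k}$ whose vertices are the elements of $\cup_j \mathcal{J}_{r,j}$, where $\mathcal{J}_{r,j}$ is the paper's set of formal left-normed commutators $[s_{i_1},\ldots,s_{i_j}]$ (not a Hall/Witt basis), and observes that each vertex of weight $j+1$ arises from a top-weight vertex of $\Gamma_{r,j}$ by bracketing with one of the $r$ generators; counting the edges of the resulting complete bipartite graph gives $|\Gamma_{r,k+1}\setminus\Gamma_{r,k}| = r\,|\Gamma_{r,k}\setminus\Gamma_{r,k-1}|$, i.e.\ exactly the two-term identity you isolated, obtained by construction rather than by formula. (This means the $\Delta_{r,k}$ of this lemma is not the Witt number of the preceding proposition --- a tension internal to the paper which your attempt in effect uncovered.) Your treatment of the corollary is fine: the characteristic roots $1$ and $r$ together with $c(2,1)=2$, $c(2,2)=3$ give $c(2,k)=2^{k-1}+1$, and your value $c(2,2)=3$ is the one the paper's own computation actually uses (the ``$c(2,2)=1$'' in the statement is a slip for $\Delta_{2,2}=1$).
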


Also, we should include the notion of null sequences. For any group $G = \langle S \; | \; R \rangle$ consider $w\in F(S)$. Word $w$ can be broken down into a product of two words (possibly empty), $w=uv$. Also, for any $r \in R$ and $s\in S$ words $uss^{-1}v$ and $usrs^{-1}v$ represent the same element in $G$ as $w$. Actually, any two words representing the same element in $G$ can be transformed into each other using these alterations. They are called null-sequences (see \cite{hours}).

\subsection{Examples on $N_{2,2}$ and $N_{2,3}$}
The Discrete Heisenberg group is the only group for which the geometric criterion has already been known. It was introduced in \cite{Shapiro} and widely used in \cite{Ruslan, Shapiro, InfiniteG, Rational growth}. Its $\pi_2$ epimorphism is actually the orthogonal projection of $\Cay(N_{2,2},\{a,b\})$ onto $\Cay(\ZZ\times\ZZ,\{a,b\})$ given by the abelenization map. Moreover, the corresponding short exact sequence is split

$$1\longrightarrow \ZZ \longrightarrow N_{2,2}\longrightarrow \ZZ\times\ZZ\longrightarrow1$$

   \begin{figure}[h]
  \centering{
\includegraphics[width=110mm]{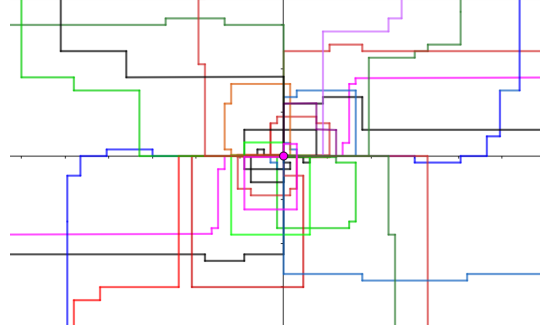}}
\caption{Projections of words in $N_{2,2}$ on  $\Cay(N_{2,1})$.}
\label{Geodesics}
\end{figure}

The criterion in $N_{2,2}$ is quite easy to comprehend: two words $w,v\in F(a,b)$ represent the same element in the discrete Heisenberg group if and only if they, being pictured on $\Cay(\ZZ\times\ZZ)$, end in the same point $(m,n)$ on this graph and the oriented areas of their closures $wa^{-n}b^{-m}, va^{-n}b^{-m}$ coincide.

Now let's consider $N_{2,3}$. The theorem states that we should look at $\Cay(N_{2,2})$. The set $\mathcal{J}_{2,3}$ consists of two elements: $\mathcal{J}_{2,3} = \{[a,a,b], [b,a,b]\}$. Exact computations show that areas of their projections are 2 for $[a,a,b]$ and 1 for $[b,a,b]$. It is easy to see that the closure for a word $w$ ending in $(k,m,n) \in \ZZ^3$ on $\Cay(N_{2,2})$ is $wa^{-n}b^{-m}[a,b]^{-k}$.

 \begin{figure}[h]
 \centering{
\includegraphics[width=110mm]{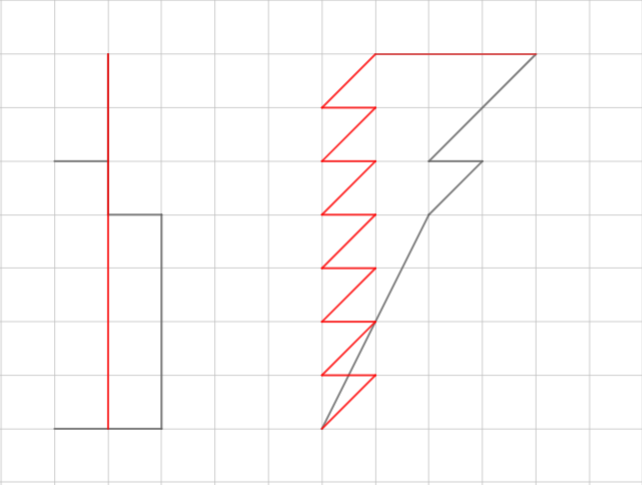}}
\caption{Word $w = aabba^{-1}ba^{-1}b^{-1}abb$ presented on $\Cay(N_{2,2})$ and projected to planes $x=0$ and $y=0$ in $(x,y,z)$ coordinate system.}
\label{Wordss}
\end{figure}
For example, picture \ref{Wordss} shows the word $w =aabba^{-1}ba^{-1}b^{-1}abb$. It is drawn in grey, and red lines represent its closure. It is easy to see that areas of figures are 4 and 10 for projections on first and second coordinates respectively, which is drawn in red has projections with areas 4 and 10. Consequently, $[w]_3 = (8,10,5,4,1)$.

\section{Proofs}

\begin{Remark}\label{area of ug}

For any element 
$$g = (T_1,T_2,\ldots, T_{\Delta_{r,k}},t_{c(r,k-1)},\ldots, t_1)$$

An area of the word $u_g$ projected on $i$-th coordinate equals to $C_iT_i$ for all $i = \overline{1,\Delta_{r,k}}$
\end{Remark}

\begin{proof}[Proof of Lemma \ref{crk}]
This proof consists of two computations:

    We define {\it graph of commutators} $\Gamma_{r,k}$. Its set of vertices is $V(\Gamma_{r,k}) = \cup \mathcal{J}_{r,j}$ for $j = \overline{1,k}$ and the commutator $[s_1,\ldots,s_j]$ for $j\leq k$ is connected with 
    \begin{enumerate}
        \item $s_1$
        \item $[s_2,\ldots,s_j]$
        \item $[s,s_1,s_2,\ldots,s_j]$ for all $s\in S$ if $j < k$.
    \end{enumerate}
    You can see $\Gamma_{2,4}$ on picture \ref{commies}. Obviously, $c(r,k) = |\Gamma_{r,k}|$. Let's compute number of elements that adds to $\Gamma_{r,k}$ in $\Gamma_{r,k+1}$. (Obviously, there's an embedding $i:\Gamma_{r,k}\rightarrow\Gamma_{r,k+1}$). It is easy to see that any element of $\Gamma_{r,k+1}\backslash i(\Gamma_{r,k})$ is connected with exactly two elements: one in $i(\Gamma_{r,k})\backslash i(\Gamma_{r,k-1})$ and one in $S$. So, number of elements in $\Gamma_{r,k+1}\backslash i(\Gamma_{r,k})$ equals to number of edges in complete bipartite graph $K_{m,n}$, where $m = |i(\Gamma_{r,k})\backslash i(\Gamma_{r,k-1})|$ and $n = |S| = r$. Thus,
    
 \begin{multline*}
       c(r,k) = r\cdot|\Gamma_{r,k}\backslash \Gamma_{r,k-1}| + c(r,k-1) =\\= r(c(r,k-1) - c(r,k-2)) + c(r,k-1) =(r+1)\cdot c(r,k-1) - r\cdot c(r,k-2)
 \end{multline*}  
 
 For $k=2$ put $c_k:=c(2,k)$. Then it admits linear recurrent relation of a form $c_k = 3c_{k-1}-2c_{k-2}$.It has corresponding characteristic polynomial $q^2-3q+2$. It has two roots: $2,1$. Then any sequence $d_n = A\cdot 2^k+B$ has the same kind of linear recurrent relation. So, by solving system of equations $d_1 = c_1,\, d_2 = c_2$ for $A,B$ we will find that form for $c_k$. Recall that $c_1 = 2, c_2 = 1$.
 $$\begin{cases}
 2 = 2A+B\\
 1=4A+B
 \end{cases}$$
 It is easy to see that $A = 1/2, B = 1$. So we have $c_k = c(2,k) = 2^{k-1}+1$.

   \begin{figure}[h]
  \centering{
\includegraphics[width=110mm]{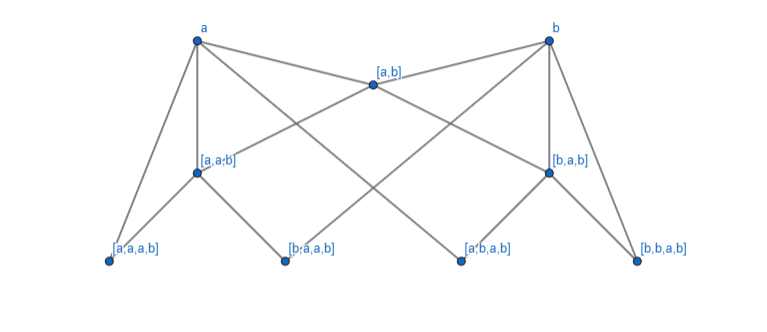}}
\caption{Graph of commutators $\Gamma_{2,4}$.}
\label{commies}
\end{figure}
\end{proof}


\begin{proof}[Proof of Lemma \ref{closed}]
Obviously, the word $w$ represents a closed curve on $\Cay(N_{r,k-1})$ if and only if $[w]_{k-1}$ is the identity element in $N_{r,k-1}$. It is equivalent to the fact that $[w]_k \in \ker(\pi_k)$. As it was noticed above, $\ker(\pi_k)=\Cent(\N)$.
\end{proof}

\begin{proof}[Proof of Lemma \ref{Area lemma}]\ \\
\begin{enumerate}
    \item Note that any $u\in \mathcal{J}_{r,k+1}$ can be seen as $[s,v]$ for some $s\in S$ and $v\in \mathcal{J}_{r,k}$. So, $u = svs^{-1}v^{-1}$. By multiplication in $\N$ it is easy to see that curve of $svs^{-1}$ ends in {\bf 0} and segments corresponding to $s$ and $s^{-1}$ on projection coincide. Moreover, projections of $v$ and $v^{-1}$ are isometric in $\RR^2$ and oriented in different ways. Thus, area of the projection is $0$.
    
    \item Let $[w]_k = [v]_k$. Then $[wv^{-1}]_k = 1$, which means (see \cite{hours}) that there's a null-sequence of $wv^{-1}$ leading to the empty word. Now we prove that any null-transformation does not change area of a word's projection. Let $u = u_1u_2$ and $s\in S$. Obviously, $u_1ss^{-1}u_2$ as a curve represents curve $u_1u_2$ with glued to $[u_1]_k$ segment that ends in $[u_1s]_k$, so its area did not change. The second transformation is $u \mapsto u_1srs^{-1}u_2$, where $r\in \mathcal{J}_{r,k+1}$ is a relation word. It is easy to see that its curve consists of $u_1u_2$ curve with curve of $srs^{-1}$ glued to $[u_1]_k$. Note, that it has zero area of projection because of the previous point in this lemma. Note, that inverse null-transformations need not to be analyzed separately, because we have shown that images' area always coincide with preimages'.
    
    \item As $[w]$ lies in $ \Cent(\N)$, words $vw$ and $wv$ represent the same element in $\N$. Obviously, area of $wv$ equals to $\Sigma_w+\Sigma_v$. So using previous point of this lemma we obtain what we desired so hardly. 
    
\end{enumerate}

\end{proof}

\begin{proof}[Proof of the lemma \ref{govno}]
Suppose $w$ represents a non-identity element $g\in \Nrk$ and it has above-mentioned coordinates. Then consider $u_g$. It has areas that are equal to $C_iT_i$ on the corresponding planes. Then by lemma \ref{Area lemma} we get that areas of $w$ coincide with these numbers. Hence, they are no zeros. 
\end{proof}

\begin{proof}[Proof of the main theorem]\ \\
Start with the necessity. As $$[w]_k=g=(T_1,T_2,\ldots, T_{\Delta_{r,k}},t_{c(r,k-1)},\ldots, t_1)$$
its image under $\pi_k$ represents $(t_{c(r,k-1)},\ldots, t_1)\in N_{r,k-1}$. That's the first part. Next, consider $u_g$. As $[u_g]_k=[w]_k$, there exists a null-sequence that transforms $u_g$ to $w$. As it was noted above, areas of projections on $T_i$ coordinates of $u_g$ equal to $C_iT_i$ with $i=\overline{1,\Delta_{r,k}}$. Also, null-sequences do not change these areas. Consequently, an area of projection of $w$ on $T_i$ coordinate equals to $C_iT_i$ for every $i=\overline{1,\Delta_{r,k}}$\\

Sufficiency. Let 
$$[w]_k=(T_1',\ldots,T_{\Delta_{r,k}}',t_{c(r,k-1)},\ldots,t_1).$$ Let us prove that $w$ represents the same element as $u_g$. Consider the word $v = wu_g^{-1}$. The following holds for it:
\begin{multline*}
    [v]_k = (T_1',\ldots,T_{\Delta_{r,k}}',t_{c(r,k-1)},\ldots,t_1)(T_1,T_2,\ldots, T_{\Delta_{r,k}},t_{c(r,k-1)},\ldots, t_1)^{-1} = \\ = (T_1',\ldots,T_{\Delta_{r,k}}',0,\ldots,0)\cdot(0,\ldots,0,t_{c(r,k-1)},\ldots,t_1)(T_1,\ldots,T_{\Delta_{r,k}},0,\ldots,0)^{-1}\cdot(0,\ldots,0,t_{c(r,k-1)},\ldots,t_1)^{-1} = \\ = 
    (T_1',\ldots,T_{\Delta_{r,k}}',0,\ldots,0)(T_1,\ldots,T_{\Delta_{r,k}},0,\ldots,0)^{-1}
    \end{multline*}
    Consequently, it suffices to check that the last product equals to the identity.
    
    By the lemma \ref{Area lemma} and the condition on areas of $w$ stated in the theorem we can state that all the areas of projections of the closed curve $wu_g^{-1}$ are equal to zero. Then, by the \ref{govno} we get $[wu_g^{-1}]_k = 1$. Then $[w]_k = g$.

\end{proof}


\begin{thebibliography}{99}
\bibitem{Ruslan} Ilya Alexeev \& Ruslan Magdiev, {\it The language of geodesics for the discrete Heisenberg group}, arXiv:1905.03226

\bibitem{Shapiro} M. Shapiro, {\it A geometric approach to the almost convexity and growth of some nilpotent groups}, Math. Ann., 285, 601624 (1989).
 
 \bibitem{InfiniteG} A. M. Vershik, A. V. Malyutin, {\it Infinite geodesics in the discrete Heisenberg group}, J. Math. Sci. (N. Y.), 232:2 (2017), 121128.
 
 \bibitem{Length}  Sebastien Blacher, {\it Word Distance On The Discrete Heisenberg Group}, Colloquim mathematicum, 2003, vol 95, 1.
 
\bibitem{hours} Clay, M., \& Margalit, D. (Eds.). (2017). {\it Office Hours with a Geometric Group Theorist}. Princeton University Press

\bibitem{geod growth}Brönnimann, J.M. (2016). {\it Geodesic growth of groups}.

\bibitem{pol} M. Gromov,{\it Groups of Polynomial growth and Expanding Maps}, Publications mathematiques I.H.É.S., 53, 1981


\bibitem{highway} Meijke Balay {\it Introduction to the Heisenberg Group}, ISBN-10: 3764391855

\bibitem{Rational growth} M. Duchin, M. Shapiro,{\it The Heisenberg group has rational growth in all generating sets}, 2017

\bibitem{Random} M.Cordes, M.Duchin, Y.Duong, Meng-Che Ho, A.P. Sanchez, {\it Random nilpotent groups I}, arXiv:1506.01426


\bibitem{Maltsev} A. I. Mal'tsev, {\it On a class of homogeneous spaces}, Izv. Akad. Nauk SSSR Ser. Mat., 13:1 (1949), 9–32
    
\bibitem{loh} Marshall Hall, Jr., The theory of groups. Second edition. American Mathematical Society, 1976

\end{thebibliography}
\end{document}